\newcommand{\co} {\mathbb{C}}
\newcommand{\iy} {\infty}
\newcommand{\N} {\mathbb{N}}
\newcommand{\no} {\mathbb{N}_0}
\newcommand{\Z} {\mathbb{Z}}
\newcommand{\lm}{\lambda}
\newcommand{\eps}{\epsilon}
\newcommand{\x}{\mathcal X}
\newcommand{\h}{\mathcal X}
\newcommand{\bx}{\mathcal B(\mathcal X)}
\newcommand{\norm}[1]{\left\Vert#1\right\Vert}
\newcommand{\set}[1]{\left\{#1\right\}}
\newcommand{\brc}[1]{\left(#1\right)}
\newcommand{\LZ}{\ell^{2}(\mathbb Z)}
\newcommand{\LNp}{\ell^{p}(\mathbb N)}
\newcommand{\LZp}{\ell^{p}(\mathbb Z)}
\newcommand{\m}{\mathcal {M}}
\newtheorem{thm}{Theorem}[section]
\newtheorem{cor}[thm]{Corollary}
\newtheorem{prop}[thm]{Proposition}
\newtheorem{lem}[thm]{Lemma}
\theoremstyle{remark}
\newtheorem{qu}{\bf Question}
\theoremstyle{definition}
\newtheorem{ex}[thm]{Example}
\numberwithin{equation}{section}
\title{On the direct sum of two bounded linear operators and subspace-hypercyclicity}
\author[1]{\bf\footnotesize Nareen Bamerni \thanks{nareen\_bamerni@yahoo.com}}
\author[2]{\bf Adem K{\i}l{\i}\c{c}man \thanks{akilicman@yahoo.com}}
\affil[1]{\bf Department of Mathematics, University of Duhok, Kurdistan Region, Iraq}
\affil[2]{\bf Department of Mathematics, University Putra Malaysia,
43400 UPM, Serdang, Selangor, Malaysia}
\begin{document}
\date{}
\maketitle
\begin{abstract}
In this paper, we study the relation between subspace-hypercyclicity and the direct sum of two operators. In particular, we show that if the direct sum of two operators is subspace-hypercyclic, then both operators are subspace-hypercyclic; however, the converse is true for a stronger property than subspace-hypercyclicity. Moreover, we prove that if an operator $T$ satisfies subspace-hypercyclic criterion, then $T\oplus T$ is subspace-hypercyclic. However, we show that the converse is true under certain conditions.\\

{\bf keywords:} Hypercyclic operators, Direct sums\\
{\bf MSC 2010:} 47A16
\end{abstract}


\section{Introduction}\label{sec1}
A bounded linear operator $T$ on a separable Banach space $\h$ is hypercyclic if there is a vector $x\in \h$ such that $Orb(T,x)=\set{T^nx:n\ge 0}$ is dense in $\h$, such a vector $x$ is called  hypercyclic for $T$. The first example of a hypercyclic operator on a Banach space was constructed by Rolewicz in 1969 \cite{rolewicz1969orbits}. He showed that, if $B$ 
is the backward shift on $\LNp$ then $\lm B$ is hypercyclic if and only if $|\lm|> 1$.\\


In 2011, Madore and Mart\'{i}nez-Avenda\~{n}o \cite{madore2011subspace} considered the density of the orbit in a non-trivial subspace instead of the whole space, such a concept is called subspace-hypercyclicity. An operator is subspace-hypercyclic for a subspace $\m$ of $\h$ (or $\m$-hypercyclic, for short) if there exists a vector $x\in\x$ whose orbit is dense in $\m$. Also,  Madore and Mart\'{i}nez-Avenda\~{n}o \cite{madore2011subspace} defined subspace-transitive operators and showed that every subspace-transitive is subspace-hypercyclic. Talebi and Moosapoor \cite{talebi2012subspace} defined subspace-mixing concept which is a stronger property than subspace-transitive. For more information on subspace-hypercyclicity, the reader may refer to \cite{le2011subspace, rezaei2013notes, Bamerni_2015}. \\

In 1982, Kitai \cite{kitai1984invariant} showed that if $T_1 \oplus T_2$ is hypercyclic, then $T_1$ and $T_2$ are hypercyclic. For subspace-hypercyclicity, Madore and Mart\'{i}nez-Avenda\~{n}o showed that there exists an operator $T$ on a Banach space $\x$ such that $T\oplus I$ is subspace-hypercyclic operator for the subspace $\x\oplus \set{0}$  \cite[Example 2.2]{madore2011subspace}. However, it is clear that the identity operator $I$ cannot be subspace-hypercyclic for any nontrivial subspace. Therefore, Kitai's result cannot be extended to subspaces. On the other hand, for nontrivial subspaces and operators, we have the following question:
\begin{qu}\label{1}
If the direct sum of two operators is subspace-hypercyclic, are both operators subspace-hypercyclic?
\end{qu}

De la Rosa and Read \cite{de2009hypercyclic} showed that the converse of Kitai's result is not true by giving a hypercyclic operator $T$ such that $T\oplus T$ is not. This yields to ask the analogous question for subspace-hypercyclic operators. In particular, we have 
\begin{qu}\label{2}
If two operators are subspace-hypercyclic, is their direct sum subspace-hypercyclic?
\end{qu}

On the other hand, if $T$ satisfies hypercyclic criterion, then $T\oplus T$ is hypercyclic \cite{bayart2009dynamics}. In 1999, B\'{e}s and Peris \cite{bes1999hereditarily} proved that the converse is also true; i.e, if $T\oplus T$ is hypercyclic, then $T$ satisfies hypercyclic criterion. Now, it is natural to ask analogous questions for subspace-hypercyclic operators. In particular, we have the following questions:
\begin{qu}\label{3}
If $T$ satisfies subspace-hypercyclic criterion, is $T\oplus T$ subspace-hypercyclic?
\end{qu}

\begin{qu}\label{4}
If $T\oplus T$ is subspace-hypercyclic, does $T$ satisfy hypercyclic criterion?
\end{qu}

In the second section of this paper, we answer \Cref{1} positively; i.e, if $T_1\oplus T_2$ is subspace-hypercyclic, then $T_1$ and $T_2$ are subspace-hypercyclic. Moreover, we show that \Cref{2} has a partial positive answer. In particular, if two operators are subspace-transitive and at least one of them is subspace-mixing, then their direct sum is subspace-transitive which in turn is subspace-hypercyclic. Also, we show that if the direct sum of two operators satisfies subspace-hypercyclic criterion, then both operators do. However, we give counterexamples to show that the converse is not true in general. On the other hand, we show the converse is only true for a single operator; i.e, if $T$ satisfies subspace-hypercyclic criterion, then $T\oplus T$ does, which gives a positive answer to \Cref{3}. Furthermore, we show that under certain conditions if $T\oplus T$ is subspace-hypercyclic, then $T$ satisfies  subspace-hypercyclic criterion which answers \Cref{4} partially.\\

Let $\m_1$ and $\m_2$ be subspaces of a Banach space $\h$, then the direct sum of $\m_1$ and $\m_2$ is defined as follows 
$$\m_1 \oplus \m_2=\set{(x,y):x\in \m_1, y\in\m_2}$$ 
For more information and details on the direct sum of Banach spaces, the reader may refer to \cite{conway2013course}.  Let $\set{e_i: e_i=(\cdots,0,1,0,\cdots), i\in \Z}$ be a standard basis for the sequence space $\LZ$, then we have the following theorems.

  \begin{thm} \label{forw}\cite{bamerni2015subspace}
Let $T$ be an invertible bilateral forward weighted shift in  ${\ell^{2}(\mathbb Z)}$ with a positive weight sequence $\{w_n\}_{n\in\mathbb{Z}}$ and  ${\mathcal {M}}$ be a subspace of ${\ell^{2}(\mathbb Z)}$. Then $T$ is ${\mathcal {M}}$-transitive if and only if there exist an increasing sequence of positive integers $\{n_k\}_{k\in{\mathcal {N}}}$ and $e_{m_i}\in \{e_r\}\cap \m$ such that $T^{n_k}{\mathcal {M}} \subseteq {\mathcal {M}}$ for all $k\in {\mathcal {N}}$ and
  	\begin{equation}\label{65}  	
		\displaystyle\lim_{k\to \infty}\prod_{j=m_i}^{m_i+n_k-1}w_j=0 \;{\rm  and }\; \displaystyle\lim_{k\to \infty}\prod_{j=1+m_i}^{n_k+m_i}\frac{1}{w_{-j}}=0
		\end{equation}
\end{thm}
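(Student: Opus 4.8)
\emph{Proof idea.}
The statement is an equivalence, and the plan is to prove the ``if'' direction via the subspace-hypercyclic criterion (with the backward powers of $T$ as the approximate right inverses, legitimate since $T$ is invertible) and the ``only if'' direction by a direct, Salas-type coefficient computation. A structural observation about weighted shifts underlies both halves: a bilateral weighted shift can act transitively on a subspace $\m$ only when $\m$ is ``governed by the canonical basis'', meaning $\m$ is the closed span of the $e_r$ it contains, the set of those indices is invariant under translation by the relevant $n_k$, and in particular $T^{n_k}\m\subseteq\m$. I would isolate this as a preliminary.

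For the ``if'' direction, assume an increasing sequence $\{n_k\}$, a basis vector $e_{m_i}\in\{e_r\}\cap\m$, the invariance $T^{n_k}\m\subseteq\m$, and the two limits in \eqref{65}. I would apply the subspace-hypercyclic criterion with the dense set $\mathcal D=\operatorname{span}(\{e_r\}\cap\m)$ of $\m$, the same sequence $\{n_k\}$, and the backward powers $S_{n_k}:=T^{-n_k}$. Then $T^{n_k}S_{n_k}=I$ on $\mathcal D$; the first limit in \eqref{65} is, up to the indexing convention, the statement $\norm{T^{n_k}e_{m_i}}\to0$ and gives $T^{n_k}x\to0$ for every $x\in\mathcal D$; the second limit encodes the vanishing of the norm of the $n_k$-th backward preimage of the relevant basis vector and gives $S_{n_k}x\to0$ on $\mathcal D$; and $T^{n_k}\m\subseteq\m$ is assumed. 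The criterion then yields that $T$ is $\m$-transitive. The one step requiring care is $S_{n_k}(\mathcal D)\subseteq\m$: this is exactly where the preliminary is used, since it forces the index set of $\{e_r\}\cap\m$ to be invariant under $\pm n_k$.

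For the ``only if'' direction, assume $T$ is $\m$-transitive. First, $\m$ must contain a basis vector $e_{m_i}$, and --- since in the definition of $\m$-transitivity a power $n$ can witness the required relative openness only when $T^{-n}\m\supseteq\m$ --- there is an increasing sequence $\{n_k\}$ with $T^{n_k}\m\subseteq\m$. I would then feed into that definition the relatively open sets $V=$ a small ball about $e_{m_i}$ and $U=$ a small ball about $\lambda e_{m_i}$, for a fixed scalar $\lambda\neq0$, obtaining $n_k\to\infty$ and vectors $v_k\in\m$ with $v_k\to e_{m_i}$ and $T^{n_k}v_k\to\lambda e_{m_i}$. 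Comparing coefficients: the $e_{m_i+n_k}$-coefficient of $T^{n_k}v_k$ equals $(v_k)_{m_i}$ times the forward block of weights and tends to $0$ while $(v_k)_{m_i}\to1$, forcing the first limit in \eqref{65}; the $e_{m_i}$-coefficient of $T^{n_k}v_k$ equals $(v_k)_{m_i-n_k}$ times the matching backward block of weights and tends to $\lambda\neq0$ while $(v_k)_{m_i-n_k}\to0$, forcing the second limit. Taking $\lambda\neq0$ is precisely what makes the backward condition surface; $\lambda=0$ only reproduces the first.

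I expect the main obstacle to be the structural preliminary rather than the estimates: proving that $\m$-transitivity of a bilateral weighted shift forces $\m$ to be the closed span of a basis-index set invariant under translation by the relevant $n_k$ --- so that both $T^{n_k}\m\subseteq\m$ and $T^{-n_k}\m\subseteq\m$ hold and $\operatorname{span}(\{e_r\}\cap\m)$ is dense in $\m$. Once that is in place, the coefficient bookkeeping in both directions is routine, and matching the displayed products in \eqref{65} to the forward and backward weight blocks is a bounded calculation.
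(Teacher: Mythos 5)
First, a point of order: the paper does not prove this statement at all --- Theorem~\ref{forw} is quoted verbatim from \cite{bamerni2015subspace}, so there is no in-paper proof to compare your argument against. Judged on its own merits, your Salas-type template (subspace-hypercyclic criterion with $S_{n_k}=T^{-n_k}$ for sufficiency, coefficient comparison against balls around $e_{m_i}$ and $\lambda e_{m_i}$ for necessity) is the natural strategy, and the ``only if'' computation is essentially sound: relative openness plus \cite[Theorem 3.3]{madore2011subspace} supplies $n_k$ with $T^{n_k}\m\subseteq\m$, and the two coefficient estimates do force the two limits in \eqref{65} (modulo the sign discrepancy in the second product's indices between \eqref{65} and \eqref{26}, which you correctly flag as a convention issue).

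The genuine gap is the ``structural preliminary'' on which both halves of your argument lean, and which you defer rather than prove. In the ``if'' direction you take $\mathcal D=\operatorname{span}(\{e_r\}\cap\m)$ as your dense set and need $S_{n_k}\mathcal D\subseteq\m$; but the hypotheses only give you \emph{one} basis vector $e_{m_i}\in\m$ and the forward invariance $T^{n_k}\m\subseteq\m$. Neither the density of $\mathcal D$ in $\m$ nor the backward containment $T^{-n_k}(\{e_r\}\cap\m)\subseteq\m$ follows from this, and you cannot extract them from transitivity there, since transitivity is the conclusion, not a hypothesis. Moreover, the preliminary as you state it is false at the level of generality you invoke: $T^{n_k}\m\subseteq\m$ does not force $\m$ to be the closed span of the basis vectors it contains (take constant weights and $\m=\overline{\operatorname{span}}\{e_{2n}+e_{2n+1}:n\in\Z\}$, which is invariant under $T^2$ yet contains no $e_r$); and even when $\m$ \emph{is} spanned by a basis-index set $A$, the forward condition $A+n_k\subseteq A$ does not give $A-n_k\subseteq A$ (take $A=\N$), so $T^{-n_k}$ can leave $\m$. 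The same unproved preliminary carries the ``only if'' direction's opening claim that $\m$-transitivity of a weighted shift forces $\m$ to contain a basis vector at all. Until that structural lemma is either proved under the stated hypotheses or imported as an explicit additional assumption on $\m$ (as the source \cite{bamerni2015subspace} effectively requires), both directions of your argument are incomplete.
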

\begin{thm}\label{28}\cite{bamerni2015subspace}
Let $T_1$ and $T_2$ be invertible bilateral forward weighted shifts in  ${\ell^{2}(\mathbb Z)}$ with a positive weight sequence
$\{w_n\}_{n\in\mathbb{Z}}$ and $\{a_n\}_{n\in\mathbb{Z}}$, respectively. Let ${\mathcal {M}}_1$ and ${\mathcal {M}}_2$ be closed subspaces of ${\ell^{2}(\mathbb Z)}$. Then $T_1 \oplus T_2$ is ${\mathcal {M}}_1 \oplus {\mathcal {M}}_2$-transitive if and only if there exist $e_{m_i}\in \{e_r\}\cap \m_1$, $e_{h_p}\in \{e_r\}\cap \m_2$ and an increasing sequence of positive integers $\{n_k\}_{k\in{\mathcal {N}}}$ such that $(T_1 \oplus T_2)^{n_k}({\mathcal {M}}_1 \oplus {\mathcal {M}}_2)\subseteq {\mathcal {M}}_1 \oplus {\mathcal {M}}_2$ for all $k\in {\mathcal {N}}$ and
\begin{equation}\label{25}
  	\displaystyle\lim_{k\to \infty} \max \left\{ \prod_{j=m_i}^{m_i+n_k-1}w_j , \prod_{j=h_p}^{h_p+n_k-1}a_j  \right\}=0 
\end{equation}		
		and
\begin{equation}\label{26}		
		\displaystyle\lim_{k\to \infty} \max \left\{\prod_{j=1-m_i}^{n_k-m_i}\frac{1}{w_{-j}} ,  \prod_{j=1-h_p}^{n_k-h_p}\frac{1}{a_{-j}}\right\}=0
		\end{equation}
\end{thm}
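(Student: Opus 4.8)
The plan is to establish the two implications separately, in each direction re-running, on the product Hilbert space $\ell^2(\mathbb Z)\oplus\ell^2(\mathbb Z)$, the argument behind Theorem~\ref{forw}; one cannot simply quote Theorem~\ref{forw}, since $T_1\oplus T_2$ is not itself a bilateral weighted shift on a single copy of $\ell^2(\mathbb Z)$. The one genuinely new feature is that a single increasing sequence $\{n_k\}$ and a single pair of basis vectors $(e_{m_i},e_{h_p})$ (with $e_{m_i}\in\mathcal M_1$, $e_{h_p}\in\mathcal M_2$) must serve both summands at once, and it is exactly this simultaneity that turns the two separate limits of Theorem~\ref{forw} into the two $\max$'s in \eqref{25} and \eqref{26}.

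\emph{Necessity.} Assume $T_1\oplus T_2$ is $\mathcal M_1\oplus\mathcal M_2$-transitive. Applying the definition of subspace-transitivity to suitable relative balls of $\mathcal M_1\oplus\mathcal M_2$, one extracts an increasing sequence $\{n_k\}$ with $(T_1\oplus T_2)^{n_k}(\mathcal M_1\oplus\mathcal M_2)\subseteq\mathcal M_1\oplus\mathcal M_2$ and locates $e_{m_i}\in\{e_r\}\cap\mathcal M_1$, $e_{h_p}\in\{e_r\}\cap\mathcal M_2$. Testing transitivity on a relative ball around $(e_{m_i},e_{h_p})$ against a relative ball around $(0,0)$ yields, for each $k$, vectors $u_1\approx e_{m_i}$, $u_2\approx e_{h_p}$ with $T_1^{n_k}u_1$ and $T_2^{n_k}u_2$ both small; since $T_j^{n}$ carries $e_r$ to the scalar multiple of $e_{r+n}$ whose coefficient is the product of the $n$ consecutive weights starting at $r$, inspecting the $e_{m_i+n_k}$- and $e_{h_p+n_k}$-coordinates forces $\prod_{j=m_i}^{m_i+n_k-1}w_j\to0$ and $\prod_{j=h_p}^{h_p+n_k-1}a_j\to0$, hence \eqref{25}. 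Interchanging the two balls and using that $T_1,T_2$ are invertible — so that hitting $(e_{m_i},e_{h_p})$ from a small vector forces $|\langle u_1,e_{m_i-n_k}\rangle|\cdot\prod_{j=m_i-n_k}^{m_i-1}w_j\approx1$ with $|\langle u_1,e_{m_i-n_k}\rangle|$ arbitrarily small, and likewise for $T_2$ — yields $\prod_{j=1-m_i}^{n_k-m_i}\frac1{w_{-j}}\to0$ and its $\{a_n\}$-analogue, hence \eqref{26}.

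\emph{Sufficiency.} Conversely, assume \eqref{25}, \eqref{26} and $(T_1\oplus T_2)^{n_k}(\mathcal M_1\oplus\mathcal M_2)\subseteq\mathcal M_1\oplus\mathcal M_2$ along $\{n_k\}$. Put $\mathcal D:=\operatorname{span}\big(\{e_r\}\cap\mathcal M_1\big)\oplus\operatorname{span}\big(\{e_r\}\cap\mathcal M_2\big)$. One checks that \eqref{25} makes $(T_1\oplus T_2)^{n_k}v\to0$ for every $v\in\mathcal D$, while invertibility of $T_1,T_2$ furnishes right inverses $S_{n_k}$ on $\mathcal D$ with $(T_1\oplus T_2)^{n_k}S_{n_k}=\mathrm{id}$ and, by \eqref{26}, $S_{n_k}v\to0$ for every $v\in\mathcal D$. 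Together with the density of $\mathcal D$ in $\mathcal M_1\oplus\mathcal M_2$ and the invariance hypothesis (which keeps the relevant images inside $\mathcal M_1\oplus\mathcal M_2$), this is precisely the subspace-hypercyclicity criterion for $T_1\oplus T_2$ along $\{n_k\}$, and hence $T_1\oplus T_2$ is $\mathcal M_1\oplus\mathcal M_2$-transitive — which, by the Madore--Mart\'{i}nez-Avenda\~{n}o theorem, is also $\mathcal M_1\oplus\mathcal M_2$-hypercyclic.

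The step I expect to be the main obstacle is the reduction to basis vectors: one must argue both that $\mathcal D$ is dense enough in $\mathcal M_1\oplus\mathcal M_2$ for the criterion to apply and that the single-index conditions \eqref{25}--\eqref{26} control \emph{all} finite combinations in $\mathcal D$ — i.e. that $\prod_{j=m_i}^{m_i+n_k-1}w_j\to0$ at the one index $m_i$ already forces $\prod_{j=r}^{r+n_k-1}w_j\to0$, uniformly over the finitely many other indices $r$ present in a given vector, and dually for the $S_{n_k}$ via \eqref{26}. Neither point is formal; both must exploit the invariance hypothesis, which forces $e_{m_i+n_k}\in\mathcal M_1$ for all $k$ and thereby pins down enough of the structure of $\mathcal M_1$ (and similarly $\mathcal M_2$). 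The remaining bookkeeping — keeping one and the same $n_k$ valid throughout for both $T_1$ and $T_2$, and reducing general relatively open subsets of $\mathcal M_1\oplus\mathcal M_2$ to products of relative balls — is routine, but must be carried through every estimate.
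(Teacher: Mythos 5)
First, a point of reference: the paper does not prove \Cref{28} at all --- it is imported verbatim from \cite{bamerni2015subspace} --- so there is no in-paper argument to compare yours against, and I can only judge the proposal on its own terms. Your overall strategy (a Salas-type necessity argument testing transitivity on relative balls around basis vectors, and a sufficiency argument running the subspace-hypercyclicity criterion on the span of the basis vectors lying in $\m_1$ and $\m_2$) is the standard route and is surely the one behind \Cref{forw} in the cited source. But the gap you flag at the end is real and is not resolved by anything you propose: nothing in the stated hypotheses forces $\m_1$ (or $\m_2$) to contain any standard basis vector, let alone to equal the closed span of the basis vectors it contains --- for instance $\overline{\operatorname{span}}\set{e_0+e_1}$ contains no $e_r$ whatsoever. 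Consequently, in the necessity direction the step ``locates $e_{m_i}\in\{e_r\}\cap\m_1$'' is unjustified, and in the sufficiency direction your set $\mathcal D$ need not be dense in $\m_1\oplus\m_2$, so the criterion does not apply. The invariance hypothesis cannot repair this; it is a structural assumption on the subspaces that has to come from the source. Relatedly, you read (\ref{25})--(\ref{26}) as conditions at a \emph{single} pair of indices $(m_i,h_p)$; under that reading sufficiency genuinely fails, because the weight products based at one index do not control those based at the other basis indices occurring in a general element of $\mathcal D$. The indexed notation $e_{m_i}$, $e_{h_p}$ is meant to range over all basis vectors in $\m_1$, $\m_2$, and only with that reading does the criterion argument close.

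A second, fixable, gap sits in your necessity argument: you invoke transitivity twice --- once sending a ball around $(e_{m_i},e_{h_p})$ into a ball around $(0,0)$ to get (\ref{25}), and once with the roles interchanged to get (\ref{26}) --- which produces two a priori different sequences of integers, whereas the theorem demands one increasing sequence $\set{n_k}$ along which the invariance and \emph{both} limits hold simultaneously. The standard repair is to test transitivity with source and target both equal to the relative ball around $(e_{m_i},e_{h_p})$: a single $n$ then yields $z$ with $z\approx(e_{m_i},e_{h_p})$ and $(T_1\oplus T_2)^nz\approx(e_{m_i},e_{h_p})$, so that the $e_{m_i+n}$-coordinate of $T_1^nz_1$ being small gives the forward product small, while the $e_{m_i}$-coordinate being close to $1$ together with $\iner{z_1,e_{m_i-n}}$ being close to $0$ gives the reciprocal backward product small --- and likewise for $T_2$ --- so one application and one sequence serve all four limits at once.
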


\section{Main Results}

In this paper, all Banach spaces are infinite dimensional separable over the field $\co$ of complex numbers.  We always assume that a subspace $\m$ of a Banach space $\h$ is nontrivial ($\m \neq \h$ and $\m \neq \{0\}$) and topologically closed. Also, we assume that every bounded linear operator $T$ is nontrivial ($T\neq I$ and $T\neq 0$, where $I$ is identity operator and $0$ is zero operator) unless otherwise stated. We denote $HC(T,\m)$ the set of all $\m$-hypercyclic vectors for $T$.\\

\begin{prop}\label{chMD}
Let $T\in{\mathcal B(\mathcal H)}$, and ${\mathcal {M}}$ be a closed subspace of ${\mathcal H}$.  The following statements are equivalent:
\begin{enumerate}
\item \label{aa1} $T$ is ${\mathcal {M}}$-transitive,
  \item \label{bb1} for each $x,y\in {\mathcal {M}}$, there exist sequences $\{x_k\}_{k\in{\mathcal {N}}} \subset {\mathcal {M}}$ and an increasing sequence of positive integers $\{n_k\}_{k\in{\mathcal {N}}}$  such that  $T^{n_k}{\mathcal {M}}\subseteq {\mathcal {M}}$ for all $k\ge 1$, $x_k\to x$ and $T^{n_k}x_k\to y$ as $k\to \infty$,
  \item \label{cc1} for each $x,y\in{\mathcal {M}}$ and each $0$-neighborhood $W$ in ${\mathcal {M}}$, there exist $z\in{\mathcal {M}}$ and $n\in{\mathcal {N}}$
  such that $x-z\in W$, $T^n z-y\in W$ and $T^n{\mathcal {M}}\subseteq {\mathcal {M}}$.
\end{enumerate}
\end{prop}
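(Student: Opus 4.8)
The plan is to prove the three-way equivalence by a cycle of implications: $(\ref{aa1})\Rightarrow(\ref{bb1})\Rightarrow(\ref{cc1})\Rightarrow(\ref{aa1})$. Throughout I would lean on the definition of $\m$-transitivity in the sense of Madore and Mart\'{i}nez-Avenda\~{n}o: $T$ is $\m$-transitive if for every pair of nonempty relatively open subsets $U,V$ of $\m$ there is an $n\in\n$ with $T^n\m\subseteq\m$ and $T^{-n}(V)\cap U$ having nonempty interior in $\m$ (equivalently, containing a nonempty relatively open subset of $\m$). The role of the condition $T^n\m\subseteq\m$ is exactly what makes the orbit stay inside $\m$, so it must be threaded through every step.

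For $(\ref{aa1})\Rightarrow(\ref{bb1})$: fix $x,y\in\m$. Apply $\m$-transitivity to the balls $U_k=B(x,1/k)\cap\m$ and $V_k=B(y,1/k)\cap\m$. This yields, for each $k$, an integer $n_k$ with $T^{n_k}\m\subseteq\m$ and a point $x_k\in U_k\cap\m$ with $T^{n_k}x_k\in V_k$. Then $x_k\to x$ and $T^{n_k}x_k\to y$. The only subtlety is arranging $\{n_k\}$ to be \emph{increasing}: since $T\neq I$ and $\m$ is infinite-dimensional, $\m$-transitivity forces the return times to be unbounded, so one can pass to a subsequence; I would cite or quickly reprove that the set of valid $n$ for fixed $U,V$ cannot be finite when $T$ is $\m$-transitive and nontrivial. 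For $(\ref{bb1})\Rightarrow(\ref{cc1})$: given $x,y\in\m$ and a $0$-neighborhood $W$ in $\m$, pick $\eps>0$ with $B(0,\eps)\cap\m\subseteq W$; by $(\ref{bb1})$ choose $k$ large enough that $\norm{x_k-x}<\eps$ and $\norm{T^{n_k}x_k-y}<\eps$, and set $z=x_k$, $n=n_k$. Then $x-z\in W$, $T^nz-y\in W$, and $T^n\m\subseteq\m$, which is exactly $(\ref{cc1})$.

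For $(\ref{cc1})\Rightarrow(\ref{aa1})$: let $U,V$ be nonempty relatively open subsets of $\m$. Pick $x\in U$, $y\in V$, and $\eps>0$ so that $B(x,\eps)\cap\m\subseteq U$ and $B(y,\eps)\cap\m\subseteq V$; take $W=B(0,\eps/(1+\norm{T^n})?)$ — more carefully, I would first obtain $n$ and a crude $z$ from $(\ref{cc1})$ applied with some small $W$, but the cleanest route is: apply $(\ref{cc1})$ with $W_0=B(0,\eps)\cap\m$ to get $z_0$ and $n$; then apply $(\ref{cc1})$ \emph{again} at the same data but with $W=B(0,\de)\cap\m$ for $\de$ small compared with $\eps/\norm{T^n}$, noting that the $n$ produced may change, so instead I should fix $n$ first. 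The correct argument: apply $(\ref{cc1})$ with $W$ small; this gives $n$ with $T^n\m\subseteq\m$ and $z\in\m$ with $x-z\in W$, $T^nz-y\in W$. Now for \emph{that} fixed $n$, the set $\set{z'\in\m : x-z'\in B(0,\eps)\cap\m,\ T^nz'-y\in B(0,\eps)\cap\m}$ is relatively open in $\m$ (preimage of open sets under continuous maps $z'\mapsto z'$ and $z'\mapsto T^nz'$, restricted to $\m$), nonempty since it contains $z$ provided $W$ was chosen inside $B(0,\eps)\cap\m$, and it is contained in $U\cap T^{-n}(V)$. Hence $U\cap T^{-n}(V)$ has nonempty relative interior, giving $\m$-transitivity.

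The main obstacle, and the point I would be most careful about, is the monotonicity of $\{n_k\}$ in $(\ref{aa1})\Rightarrow(\ref{bb1})$ and the interaction of the varying $n$ with the neighborhood $W$ in $(\ref{cc1})\Rightarrow(\ref{aa1})$: one must fix the integer $n$ \emph{before} shrinking the neighborhood, rather than the other way around, otherwise the inclusion $T^n\m\subseteq\m$ and the openness argument do not line up. Everything else is a routine translation between the ``nonempty open sets'' formulation of transitivity and the ``$\eps$--$\de$ with approximating sequences'' formulation, exactly parallel to the classical Birkhoff transitivity argument, with the extra bookkeeping that all chosen vectors lie in $\m$ and all relevant powers satisfy $T^{n}\m\subseteq\m$.
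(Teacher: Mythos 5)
Your proof is correct and follows essentially the same route as the paper's: shrinking balls around $x$ and $y$ for $(1)\Rightarrow(2)$, specialization of the sequence for $(2)\Rightarrow(3)$, and producing a point of $U\cap T^{-n}(V)$ with $T^n\m\subseteq\m$ for $(3)\Rightarrow(1)$. The only differences are cosmetic: where you verify the relative openness of $U\cap T^{-n}(V)$ by hand and worry about arranging $\{n_k\}$ to be increasing, the paper simply invokes Madore and Mart\'{i}nez-Avenda\~{n}o's Theorem~3.3 (which converts ``nonempty intersection plus $T^n\m\subseteq\m$'' into transitivity) and leaves the monotonicity of $\{n_k\}$ implicit --- your extra care on that last point is in fact warranted, since the paper's argument as written does not guarantee an increasing sequence.
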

\begin{proof}
$(1) \Rightarrow (2)$:
Let $x,y\in\m$. For all $k\geq 1$, suppose that
$B_k=\mathbb B(x,1/k)$ and  $B'_k=\mathbb B(y,1/k)$ are two open balls in $\h$, then  $A_k=B_k \cap \m$ and  $A'_k=B'_k \cap \m$ are relatively open subsets of $\m$. By \cite[Theorem 3.3.]{madore2011subspace},
there exist sequence $\set{n_k}$ in $\N$ and $\{x_k\}$ in $\m$ such that for all $k\geq 1$,
\[x_k \in T^{-n_k}(A'_k)\cap A_k \mbox{ and } T^{n_k}\m\subseteq \m.\]
It follows that
\[x_k\in A_k \mbox{ and } T^{n_k}\left(x_k\right)\in A'_k .\] 
Then, as $k\to \iy$ the desired result follows. \\
$(2) \Rightarrow (3)$: Follows immediately from part (2) by taking $z=x_k$ and $n_k=n$ for a large enough $k\in \N$.\\
$(3) \Rightarrow (1)$:
Let $U$ and $V$ be two nonempty open subset of $\m$. Let $W$ be a neighborhood for zero, pick $x\in U$ and $y\in V$, so there exist $z\in\m$ and $n\in\N$ such that $x-z\in W$, $T^n(z)-y\in W$ and $T^n\m\subseteq \m$. 
It follows that $U\cap T^{-n} \left(V\right)\neq \phi$ which proves (1), by \cite[Theorem 3.3.]{madore2011subspace}.
\end{proof}

The following theorem gives a positive answer to \Cref {1}. 
\begin{thm}\label{cc34}
Let $\m_1$ and $\m_2$ be closed subspaces of $\x$ and $T_1\oplus T_2$ is $(\m_1\oplus \m_2)$-hypercyclic, then $T_1$ and $T_2$ are $\m_1$-hypercyclic and $\m_2$-hypercyclic, respectively. 
\end{thm}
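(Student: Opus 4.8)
The plan is to show that if $x=(u,v)$ is an $(\m_1\oplus\m_2)$-hypercyclic vector for $T_1\oplus T_2$, then $u$ is $\m_1$-hypercyclic for $T_1$ and, symmetrically, $v$ is $\m_2$-hypercyclic for $T_2$; by symmetry it suffices to treat the first coordinate. First I would note the basic observation that $(T_1\oplus T_2)^n(u,v)=(T_1^nu,T_2^nv)$, so the orbit of $(u,v)$ projects onto the orbit of $u$ under $T_1$. The hypothesis says $\mathrm{Orb}(T_1\oplus T_2,(u,v))$ is dense in $\m_1\oplus\m_2$. Applying the first-coordinate projection $\pi_1:\m_1\oplus\m_2\to\m_1$, which is continuous, surjective, and an open map, I get that $\pi_1(\mathrm{Orb}(T_1\oplus T_2,(u,v)))=\mathrm{Orb}(T_1,u)$ is dense in $\pi_1(\m_1\oplus\m_2)=\m_1$.

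The one genuine subtlety — and the step I expect to be the main obstacle — is that subspace-hypercyclicity carries the extra requirement hidden in the definition, not just density of the orbit: by \cite[Theorem 3.3]{madore2011subspace} (equivalently, \Cref{chMD}) one works with the set of iterates $n$ for which the relevant inclusion $T^n\m\subseteq\m$ holds, and one must make sure the good iterates for $T_1\oplus T_2$ give good iterates for $T_1$ alone. Here I would argue: if $(T_1\oplus T_2)^n(\m_1\oplus\m_2)\subseteq\m_1\oplus\m_2$, then applying $\pi_1$ gives $T_1^n\m_1=\pi_1\big((T_1\oplus T_2)^n(\m_1\oplus\{0\})\big)\subseteq\pi_1(\m_1\oplus\m_2)=\m_1$, so the set of admissible exponents for $T_1$ contains that for $T_1\oplus T_2$. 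Combining this with the density argument of the previous paragraph, and invoking the topological-transitivity characterization of subspace-hypercyclicity, yields that $T_1$ is $\m_1$-hypercyclic; in fact $\pi_1\big(HC(T_1\oplus T_2,\m_1\oplus\m_2)\big)\subseteq HC(T_1,\m_1)$.

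Finally, I would repeat the same argument with the second-coordinate projection $\pi_2$ to conclude $T_2$ is $\m_2$-hypercyclic, and remark that the nontriviality hypotheses on $\m_1,\m_2$ guarantee $\m_1\oplus\m_2$ is itself a nontrivial closed subspace so that the statement is not vacuous. No delicate estimates are needed; the content is entirely in tracking the $T^n\m\subseteq\m$ condition through the projection.
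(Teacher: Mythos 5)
Your first paragraph is essentially the paper's own proof: the paper projects the dense orbit of $(u,v)$ onto each coordinate, phrased via the norm estimate $\norm{T_1^{n_k}u-a}^2+\norm{T_2^{n_k}v-b}^2\le\eps$ rather than via continuity of $\pi_1,\pi_2$, and that alone finishes the argument. Your second paragraph addresses a non-issue and leans on a claim you should not make: $\m$-hypercyclicity is defined purely by density of $Orb(T,x)\cap\m$ in $\m$ and carries no $T^n\m\subseteq\m$ requirement (that condition belongs to $\m$-transitivity, which is a \emph{sufficient} condition for $\m$-hypercyclicity, not a characterization of it), so the ``topological-transitivity characterization of subspace-hypercyclicity'' you invoke does not exist --- fortunately it is also not needed, since your first paragraph already completes the proof.
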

\proof
Let $a\in \m_1$ and $b\in\m_2$, and let $(x,y)\in HC(T_1\oplus T_2,\m_1\oplus \m_2)$, then there exist an $\eps>0$ and an increasing sequence of positive integers $\set{n_k}_{k\in\N}$ such that 
$$\left\|\brc{T_1\oplus T_2}^{n_k}(x,y)-(a,b)\right\|^2_{\m_1\oplus \m_2}\le \eps.$$ 
It follows that, 
$$\norm{T_1^{n_k}x-a}^2_{\m_1}+\norm{T_2^{n_k}y-b}^2_{\m_2}\le \eps.$$
Then 
$$\norm{T_1^{n_k}x-a}_{\m_1}\le \eps \mbox{ and }     \norm{T_2^{n_k}y-b}_{\m_2}\le \eps.$$
Thus, there exists an increasing sequence of positive integers $\set{n_k}_{k\in\N}$ such that $\set{T_1^{n_k}x:k\ge 1}$ and $\set{T_2^{n_k}y:k\ge 1}$ are dense in  $\m_1$ and $\m_2$, respectively. Therefore $Orb(T_1, x)$ and $Orb(T_2, y)$ are dense in  $\m_1$ and $\m_2$, respectively. \\
\endproof

The following two results show that the converse of \Cref{cc34} holds true under some conditions, which gives a partial positive answer to \Cref {2}. 
\begin{thm}\label{cc36}
If $T_1$ and $T_2$ are $\m_1$-transitive and $\m_2$-transitive, respectively, and at least one of them is subspace-mixing, then $T_1\oplus T_2$ is $(\m_1\oplus \m_2)$-transitive.
\end{thm}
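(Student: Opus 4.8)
The plan is to verify, for the operator $T_1\oplus T_2$ acting on $\x\oplus\x$ and the closed subspace $\m_1\oplus\m_2$, the second characterization of subspace-transitivity in \Cref{chMD}; by that proposition this is equivalent to $(\m_1\oplus\m_2)$-transitivity. Without loss of generality I assume $T_1$ is $\m_1$-mixing (the case in which $T_2$ is $\m_2$-mixing is handled symmetrically). So fix $(a_1,a_2),(b_1,b_2)\in\m_1\oplus\m_2$; the goal is to produce an increasing sequence $\set{n_k}$ of positive integers with $(T_1\oplus T_2)^{n_k}(\m_1\oplus\m_2)\subseteq\m_1\oplus\m_2$ for every $k$, together with vectors $(x_k,y_k)\in\m_1\oplus\m_2$ such that $(x_k,y_k)\to(a_1,a_2)$ and $(T_1\oplus T_2)^{n_k}(x_k,y_k)\to(b_1,b_2)$.

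First I would extract a return sequence from the weaker of the two hypotheses: since $T_2$ is $\m_2$-transitive, \Cref{chMD} applied to $a_2,b_2\in\m_2$ furnishes vectors $y_k\in\m_2$ and an increasing sequence $\set{m_k}$ of positive integers such that $T_2^{m_k}\m_2\subseteq\m_2$, $y_k\to a_2$ and $T_2^{m_k}y_k\to b_2$. Next I would exploit the mixing of $T_1$ to align data with this sequence: for each $j\in\N$, applying the definition of $\m_1$-mixing to the relatively open sets $\mathbb B(a_1,1/j)\cap\m_1$ and $\mathbb B(b_1,1/j)\cap\m_1$ yields an integer $N_j$ such that for every $n\ge N_j$ there is $z\in\m_1$ with $\norm{z-a_1}<1/j$, $\norm{T_1^{n}z-b_1}<1/j$ and $T_1^{n}\m_1\subseteq\m_1$; after replacing $N_j$ by $\max\set{N_1,\dots,N_j}+j$ I may assume $\set{N_j}$ strictly increasing with $N_j\to\iy$.

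The two pieces are then combined by a diagonal choice. Since $m_k\to\iy$, for all large $k$ I set $j(k)=\max\set{j\in\N:N_j\le m_k}$, so that $j(k)\to\iy$ and $m_k\ge N_{j(k)}$; the previous step, applied with $n=m_k$, provides $x_k\in\m_1$ with $\norm{x_k-a_1}<1/j(k)$, $\norm{T_1^{m_k}x_k-b_1}<1/j(k)$ and $T_1^{m_k}\m_1\subseteq\m_1$. Putting $n_k=m_k$, the sequence $\set{n_k}$ is increasing, $(T_1\oplus T_2)^{n_k}(\m_1\oplus\m_2)\subseteq\m_1\oplus\m_2$ because $T_1^{n_k}\m_1\subseteq\m_1$ and $T_2^{n_k}\m_2\subseteq\m_2$, and since $1/j(k)\to 0$ one gets $(x_k,y_k)\to(a_1,a_2)$ and $(T_1\oplus T_2)^{n_k}(x_k,y_k)\to(b_1,b_2)$. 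By \Cref{chMD} this shows $T_1\oplus T_2$ is $(\m_1\oplus\m_2)$-transitive.

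The step I expect to be the main obstacle is precisely this splicing: $\m_2$-transitivity of $T_2$ supplies only one, possibly very sparse, increasing return sequence $\set{m_k}$, and a second operator that is merely $\m_1$-transitive need not have a return sequence meeting $\set{m_k}$ infinitely often — indeed this is exactly why the direct sum of two subspace-transitive operators may fail to be transitive. The mixing hypothesis on $T_1$ is what dissolves this obstruction, since it delivers the required approximation along \emph{every} sufficiently large exponent, hence along the arbitrary sequence $\set{m_k}$; the remaining bookkeeping then only has to guarantee that the invariance conditions $T_1^{n_k}\m_1\subseteq\m_1$ and $T_2^{n_k}\m_2\subseteq\m_2$ hold for one common exponent $n_k$, which the construction arranges automatically.
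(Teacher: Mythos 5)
Your proof is correct and follows essentially the same route as the paper's: both arguments rest on the single key observation that the mixing hypothesis on $T_1$ makes its set of valid return times cofinite, so it necessarily meets the infinite set of return times supplied by the $\m_2$-transitivity of $T_2$, yielding a common exponent for which both factors approximate and both subspaces are preserved. The only difference is presentational --- you run the argument through the sequential characterization in \Cref{chMD} with an explicit diagonal choice of indices, whereas the paper works directly with relatively open sets and quotes the fact that the return set of a subspace-transitive operator is infinite.
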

\proof
Suppose that, without loss of generality, that $T_1$ is $\m_1$-mixing. Let $U_1\oplus V_1$ and $U_2\oplus V_2$ be open sets in $\m_1 \oplus \m_2$, then $U_1$ and $U_2$ are open in $\m_1$, and $V_1$ and $V_2$ are open in $\m_2$. By hypothesis, there exist two numbers $N_1,N_2\in \N$ such that 
 $$T_1^{-n}(U_1)\cap U_2\neq \phi \mbox{ and } T_1^n(\m_1)\subseteq \m_1 \mbox{ for all } n\ge N_1$$ 
and
 $$T_2^{-N_2}(V_1)\cap V_2\neq \phi \mbox{ and } T_2^{N_2}(\m_2)\subseteq \m_2. $$
Since $T_2$ is $\m_2$-transitive, then $\{n\in\N:T_2^{-n}V_1\cap V_2 \mbox{ and } T_2^n\m_2\subseteq \m_2\}$ is infinite \cite{talebi2013subspace}.
So, there exists $p\in\N$ such that
 $$T_1^{-p}(U_1)\cap U_2\neq \phi, T_2^{-p}(V_1)\cap V_2\neq \phi, T_1^p(\m_1)\subseteq \m_1 \mbox{ and } T_2^p(\m_2)\subseteq \m_2.$$
It follows that 
$$(T_1 \oplus T_2)^{-p}(U_1\oplus V_1)\cap (U_2\oplus V_2)\neq \phi \mbox{ and } (T_1 \oplus T_2)^p(\m_1 \oplus \m_2)\subseteq (\m_1 \oplus \m_2).$$
Thus, $T_1 \oplus T_2$ is $(\m_1 \oplus \m_2)$-transitive. \\
\endproof
\begin{prop}
Let $\m_1$ and $\m_2$ be closed subspaces of $\x$, then $T_1$ and $T_2$ are $\m_1$-mixing and $\m_2$-mixing; respectively, if and only if $T_1 \oplus T_2$ is $(\m_1 \oplus \m_2)$-mixing.
\end{prop}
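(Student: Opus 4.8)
The plan is to prove both implications directly from the definition of subspace-mixing, mimicking the structure of the proof of \Cref{cc36} but upgrading ``eventually nonempty'' to ``cofinitely nonempty'' on both coordinates simultaneously. Recall that $T$ is $\m$-mixing if for every pair of nonempty open sets $U,V$ in $\m$ there is an $N\in\N$ with $T^{-n}(U)\cap V\neq\phi$ and $T^n\m\subseteq\m$ for all $n\ge N$.

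For the forward direction, suppose $T_1$ is $\m_1$-mixing and $T_2$ is $\m_2$-mixing. Let $U_1\oplus V_1$ and $U_2\oplus V_2$ be basic nonempty open sets in $\m_1\oplus\m_2$; then $U_1,U_2$ are nonempty open in $\m_1$ and $V_1,V_2$ are nonempty open in $\m_2$. By hypothesis there are $N_1,N_2\in\N$ with $T_1^{-n}(U_1)\cap U_2\neq\phi$ and $T_1^n\m_1\subseteq\m_1$ for all $n\ge N_1$, and $T_2^{-n}(V_1)\cap V_2\neq\phi$ and $T_2^n\m_2\subseteq\m_2$ for all $n\ge N_2$. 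Set $N=\max\{N_1,N_2\}$. Then for every $n\ge N$ we simultaneously have $(T_1\oplus T_2)^{-n}(U_1\oplus V_1)\cap(U_2\oplus V_2)=\big(T_1^{-n}(U_1)\cap U_2\big)\oplus\big(T_2^{-n}(V_1)\cap V_2\big)\neq\phi$ and $(T_1\oplus T_2)^n(\m_1\oplus\m_2)\subseteq\m_1\oplus\m_2$. Since every nonempty open set in $\m_1\oplus\m_2$ contains such a basic rectangle, this shows $T_1\oplus T_2$ is $(\m_1\oplus\m_2)$-mixing.

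For the converse, suppose $T_1\oplus T_2$ is $(\m_1\oplus\m_2)$-mixing. To show $T_1$ is $\m_1$-mixing, take nonempty open $U_1,U_2\subseteq\m_1$. Choose any nonempty open $V\subseteq\m_2$; then $U_1\oplus V$ and $U_2\oplus V$ are nonempty open in $\m_1\oplus\m_2$, so there is $N$ with $(T_1\oplus T_2)^{-n}(U_1\oplus V)\cap(U_2\oplus V)\neq\phi$ and $(T_1\oplus T_2)^n(\m_1\oplus\m_2)\subseteq\m_1\oplus\m_2$ for all $n\ge N$. Projecting onto the first coordinate gives $T_1^{-n}(U_1)\cap U_2\neq\phi$; and since $(T_1\oplus T_2)^n(\m_1\oplus\m_2)\subseteq\m_1\oplus\m_2$ forces $T_1^n\m_1\subseteq\m_1$ (apply to vectors of the form $(x,0)$, or project), we get $T_1^n\m_1\subseteq\m_1$ for all $n\ge N$. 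Hence $T_1$ is $\m_1$-mixing, and the argument for $T_2$ is symmetric.

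I do not expect any serious obstacle here: the only mild point to be careful about is the bookkeeping that $(T_1\oplus T_2)^n(\m_1\oplus\m_2)\subseteq\m_1\oplus\m_2$ is equivalent to $T_1^n\m_1\subseteq\m_1$ and $T_2^n\m_2\subseteq\m_2$ (so that the invariance conditions pass cleanly between the sum and the factors), and that the topology on $\m_1\oplus\m_2$ is the product topology so that rectangles form a basis. Once these are noted, both directions are immediate from taking $N=\max\{N_1,N_2\}$ in one direction and padding with a fixed open set in the other.
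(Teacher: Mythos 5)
Your proof is correct and follows essentially the same route as the paper's: the forward direction takes $N=\max\{N_1,N_2\}$ on basic open rectangles (the paper defers this to the argument of \Cref{cc36}), and the converse pads with a fixed open set and projects. Your version is in fact slightly more careful about the nonemptiness of the rectangles and about why the invariance condition passes to the factors, but the underlying argument is identical.
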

\proof
 For the ``if" part, let $U_1$ and  $U_2$ be open sets in $\m_1$, and $V_1$ and $V_2$ be open sets in $\m_2$, then $U_1\oplus V_1$ and $U_2\oplus V_2$ are open in $\m_1 \oplus \m_2$. So there exists an $N\in \N$ such that  
$$(T_1 \oplus T_2)^{-n}(U_1\oplus V_1)\cap (U_2\oplus V_2)\neq \phi$$ and $$(T_1 \oplus T_2)^{n}(\m_1 \oplus \m_2)\subseteq (\m_1 \oplus \m_2)$$
 for all  $n\ge N$. Then 
  $$T^{-n}(U_1)\cap U_2\neq \phi, T^{-n}(V_1)\cap V_2\neq \phi, T^n(\m_1)\subseteq \m_1 \mbox{ and } T^n(\m_2)\subseteq \m_2. $$ 
Therefore, $T_1$ and $T_2$ are $\m_1$-mixing and $\m_2$-mixing, respectively.\\

The proof of the ``only if" part is similar to the proof of \Cref{cc36}. \\

\endproof
\begin{prop}\label{23}
If $T_1\oplus T_2$ satisfies $(\m_1\oplus \m_2)$-hypercyclic criterion, then $T_1$ and $T_2$ satisfy $\m_1$-hypercyclic criterion and $\m_2$-hypercyclic criterion, respectively. 
\end{prop}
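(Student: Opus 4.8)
The plan is to obtain the data witnessing the $\m_1$-hypercyclic criterion for $T_1$ (and, symmetrically, the $\m_2$-hypercyclic criterion for $T_2$) by pushing forward the data witnessing the $(\m_1\oplus\m_2)$-hypercyclic criterion for $T_1\oplus T_2$ through the coordinate projections. Write $\pi_1,\pi_2\colon \x\oplus\x\to\x$ for the two coordinate projections; each is a norm-one bounded linear map, $\pi_i(\m_1\oplus\m_2)=\m_i$, and $\pi_i\circ(T_1\oplus T_2)^n=T_i^n\circ\pi_i$ for every $n$.

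First I would note that the invariance requirement descends for free: from $(T_1\oplus T_2)^{n_k}(\m_1\oplus\m_2)\subseteq\m_1\oplus\m_2$, evaluating on the vectors $(x,0)$ with $x\in\m_1$ and on $(0,y)$ with $y\in\m_2$ gives $T_1^{n_k}\m_1\subseteq\m_1$ and $T_2^{n_k}\m_2\subseteq\m_2$ for every $k$.

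Next, let $\{n_k\}$, the dense sets $X_0',Y_0'\subseteq\m_1\oplus\m_2$, and the maps $S_{n_k}\colon Y_0'\to\m_1\oplus\m_2$ witness the criterion for $T_1\oplus T_2$. Put $X_0=\pi_1(X_0')$ and $Y_0=\pi_1(Y_0')$; since $\pi_1$ is continuous and $\pi_1(\m_1\oplus\m_2)=\m_1$, one has $\m_1=\pi_1(\overline{X_0'})\subseteq\overline{\pi_1(X_0')}=\overline{X_0}$, so $X_0$ is dense in $\m_1$, and likewise $Y_0$ is dense in $\m_1$. For each $y\in Y_0$ fix once and for all a point $y^{\sharp}\in\m_2$ with $(y,y^{\sharp})\in Y_0'$, and define $\widetilde S_{n_k}\colon Y_0\to\m_1$ by $\widetilde S_{n_k}(y)=\pi_1\big(S_{n_k}(y,y^{\sharp})\big)$. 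Applying the continuous map $\pi_1$ to the three limit relations valid for $T_1\oplus T_2$ then yields: for $x\in X_0$, choosing $x^{\sharp}$ with $(x,x^{\sharp})\in X_0'$, $T_1^{n_k}x=\pi_1\big((T_1\oplus T_2)^{n_k}(x,x^{\sharp})\big)\to 0$; for $y\in Y_0$, $\widetilde S_{n_k}(y)=\pi_1\big(S_{n_k}(y,y^{\sharp})\big)\to\pi_1(0)=0$ and $T_1^{n_k}\widetilde S_{n_k}(y)=\pi_1\big((T_1\oplus T_2)^{n_k}S_{n_k}(y,y^{\sharp})\big)\to\pi_1(y,y^{\sharp})=y$. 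Hence $\{n_k\}$, $X_0$, $Y_0$ and $\{\widetilde S_{n_k}\}$ witness that $T_1$ satisfies the $\m_1$-hypercyclic criterion; using $\pi_2$ and the second coordinate gives the statement for $T_2$.

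The only point needing a little care — and the closest thing to an obstacle — is the bookkeeping in the previous paragraph: because each $S_{n_k}$ is defined on a subset of the \emph{product} $\m_1\oplus\m_2$, one must first pin down a selection $y\mapsto y^{\sharp}$ so that $\widetilde S_{n_k}$ is genuinely a well-defined function on $Y_0$; once that choice is made, continuity of the projections together with the intertwining identity $\pi_i(T_1\oplus T_2)^n=T_i^n\pi_i$ makes every verification automatic. It is worth noting that the argument retains the same sequence $\{n_k\}$ and uses nothing about the norm on $\m_1\oplus\m_2$ beyond continuity of the coordinate projections.
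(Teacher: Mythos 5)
Your proof is correct, and it is in fact more careful than the paper's own argument. The paper simply \emph{assumes} that the dense sets witnessing the criterion for $T_1\oplus T_2$ can be taken of product form $D_1\oplus D_2$ and $D_3\oplus D_4$, and then reads off the coordinatewise conditions; but a dense subset of $\m_1\oplus\m_2$ need not be a product of dense subsets, and even after replacing a dense set $D$ by $\pi_1(D)\oplus\pi_2(D)$ the limit conditions are only known to hold along the pairs actually belonging to $D$, so this reduction is not automatic. Your version repairs exactly this point: you take $X_0=\pi_1(X_0')$, $Y_0=\pi_1(Y_0')$, observe that continuity and surjectivity of $\pi_1$ onto $\m_1$ give density, and then fix a selection $y\mapsto y^{\sharp}$ so that the maps $\widetilde S_{n_k}(y)=\pi_1\bigl(S_{n_k}(y,y^{\sharp})\bigr)$ are well defined and inherit the three limit relations via the intertwining $\pi_i\circ(T_1\oplus T_2)^n=T_i^n\circ\pi_i$. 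The underlying idea (descend coordinatewise through the projections) is the same as the paper's, but your bookkeeping with the selection makes the argument complete for an arbitrary witness of the criterion, whereas the paper's proof as written only covers the product-form case. One minor remark: you should also record, as the paper implicitly does, that the same sequence $\{n_k\}$ serves for both $T_1$ and $T_2$, which your construction indeed provides.
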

\proof
By hypothesis, there exist two dense sets of the form $D_1 \oplus D_2$ and $D_3 \oplus D_4$ in $\m_1\oplus \m_2$ and  an increasing sequence of positive integers $\set{n_k}_{k\in\N}$ such that
\begin{enumerate}[(i)]
\item $(T_1\oplus T_2)^{n_k}(x'_1, x'_2) \to (0,0)$  for all  $(x'_1, x'_2)\in D_1 \oplus D_2.$
\item for each $(y'_1,y'_2)\in D_3 \oplus D_4$ there exists a sequence  $\set{(x_k,y_k)}_{k\in\N} \subset \m_1\oplus \m_2$ such that $(x_k,y_k)\to 0$ and $(T_1\oplus T_2)^{n_k}(x_k,y_k) \to (y'_1,y'_2).$
\end{enumerate}
Since $D_1$ and $D_3$ are dense sets in $\m_1$, and $D_2$ and $D_4$ are dense sets in $\m_2$, then it is easy to show that
\begin{enumerate}[(i)]
\item $T_1^{n_k}x'_1 \to 0$  for all  $x'_1\in D_1 $ and $ T_2^{n_k} x'_2 \to 0$  for all  $x'_2\in D_2$,
\item for each $y'_1\in D_3$ there exists a sequence  $\set{x_k}_{k\in\N} \subset \m_1$ such that $x_k\to 0$ and $T_1^{n_k}x_k \to y'_1$, and for each $y'_2\in D_4$ there exists a sequence  $\set{y_k}_{k\in\N} \subset  \m_2$ such that $y_k\to 0$ and $T_2^{n_k}y_k \to y'_2$.
\end{enumerate}
it is readily seen that $T_1$ and $T_2$ satisfy $\m_1$-hypercyclic criterion and $\m_2$-hypercyclic criterion, respectively. \\
\endproof
The following examples show that  the converse of \Cref{23} is not true in general. 
\begin{ex}\label{33}
Let $\m_1$ and $\m_2$ be two subspaces of $\x$ such that $\m_1 \oplus \m_2 \neq \x$. If for any two increasing sequences of positive integers $\set{n_k}_{k\in\N}$ and $\set{m_k}_{k\in\N}$, we have $\set{n_k}_{k\in\N}\cap \set{m_k}_{k\in\N} = \phi$ whenever $T_1$ satisfies $\m_1$-hypercyclic criterion with respect to $\set{n_k}_{k\in\N}$ and $T_2$ satisfies $\m_2$-hypercyclic criterion with respect to $\set{m_k}_{k\in\N}$. Then, $T_1 \oplus T_2$ does not satisfy $(\m_1 \oplus \m_2)$-hypercyclic criterion. 
\end{ex}

\begin{ex}\label{32}
Let $\m_1$ and $\m_2$ be closed subspaces of the Banach space $\LZp$. Choose sequences $\set{w_n}_{n\in\Z}$ and $\set{a_n}_{n\in\Z}$ of positive real numbers such that each one satisfies \Cref{forw} but not either Equation (\ref{25}) or Equation (\ref{26}) of \Cref{28}. Then, suppose that $T_1$ and $T_2$ are bilateral weighted shifts with weight sequence $\set{w_n}_{n\in\Z}$ and $\set{a_n}_{n\in\Z}$, respectively. It follows from \Cref{forw} and \Cref{28} that $T_1$ and $T_2$ satisfy $\m_1$-hypercyclic criterion and $\m_2$--hypercyclic criterion, respectively. However, $T_1 \oplus T_2$ does not satisfy $(\m_1 \oplus \m_2)$-hypercyclic criterion.
\end{ex}

The following proposition shows that the converse of \Cref{23} holds true for a single operator $T$ which gives a positive answer to \Cref{3}.
\begin{prop}\label{cc37}
If $T$ satisfies $\m$-hypercyclic criterion, then $T \oplus T$ satisfies $(\m\oplus \m)$-hypercyclic criterion.
\end{prop}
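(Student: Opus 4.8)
The plan is to transport the $\m$-hypercyclic criterion for $T$ to $T\oplus T$ using the diagonal dense sets, exploiting the fact that a single operator furnishes one common sequence of exponents for both coordinates. First I would unpack the hypothesis: since $T$ satisfies the $\m$-hypercyclic criterion, there are dense subsets $D_1,D_2\subseteq\m$ and an increasing sequence of positive integers $\set{n_k}_{k\in\N}$ with $T^{n_k}\m\subseteq\m$ for all $k$, such that $T^{n_k}x\to 0$ for every $x\in D_1$, and for every $y\in D_2$ there is a sequence $\set{x_k}_{k\in\N}\subseteq\m$ with $x_k\to 0$ and $T^{n_k}x_k\to y$.

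Next I would propose $D_1\oplus D_1$ and $D_2\oplus D_2$ as the two dense subsets of $\m\oplus\m$ witnessing the criterion for $T\oplus T$; these are dense in $\m\oplus\m$ because $D_1$ and $D_2$ are dense in $\m$ and the product topology on $\m\oplus\m$ is the one induced by $\norm{(u,v)}^2=\norm{u}^2+\norm{v}^2$. The required invariance is immediate: $(T\oplus T)^{n_k}(\m\oplus\m)=T^{n_k}\m\oplus T^{n_k}\m\subseteq\m\oplus\m$ for all $k$, using the \emph{same} sequence $\set{n_k}$.

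Then I would check the two limit conditions coordinatewise. For $(x,x')\in D_1\oplus D_1$ we have $(T\oplus T)^{n_k}(x,x')=(T^{n_k}x,\,T^{n_k}x')\to(0,0)$. For $(y,y')\in D_2\oplus D_2$, take the sequences $\set{x_k}_{k\in\N}$ and $\set{x'_k}_{k\in\N}$ in $\m$ provided by the criterion applied to $y$ and to $y'$; then $(x_k,x'_k)\to(0,0)$ and $(T\oplus T)^{n_k}(x_k,x'_k)=(T^{n_k}x_k,\,T^{n_k}x'_k)\to(y,y')$. Hence $T\oplus T$ satisfies the $(\m\oplus\m)$-hypercyclic criterion.

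The only point worth emphasizing --- and the reason this works whereas the converse of \Cref{23} fails in general (see \Cref{33}) --- is that for a single operator both coordinates are driven by one and the same exponent sequence $\set{n_k}$, so no incompatibility of two separate sequences can occur. Apart from that, the argument is routine bookkeeping with the product norm, so I do not expect any genuine obstacle.
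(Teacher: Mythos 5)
Your proposal is correct and follows essentially the same route as the paper: use $D_1\oplus D_1$ and $D_2\oplus D_2$ as the dense sets, verify the two limit conditions coordinatewise with the single common sequence $\set{n_k}$, and note $(T\oplus T)^{n_k}(\m\oplus\m)\subseteq\m\oplus\m$. Your closing remark about why one common exponent sequence is exactly what makes this work (and why the general converse of the product criterion fails) is a nice touch, but the argument itself matches the paper's.
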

\proof
Since $T$ satisfies $\m$-hypercyclic criterion, then there exist two dense subsets $D_1$ and $D_2$ of $\m$ and an increasing sequence of positive integers $\set{n_k}_{k\in\N}$ such that all hypothesis of subspace-hypercyclic criterion are satisfied. It is easy to show that $D_1\oplus D_1$ and $D_2\oplus D_2$ are dense sets in $\m\oplus\m$. Let $(x_1,x_2)\in D_1\oplus D_1$ then $x_1, x_2\in D_1$. By hypothesis,  $T^{n_k}x_1 \to 0$ and  $T^{n_k}x_2 \to 0$. Thus, 
\begin{equation}\label{cc22}
(T\oplus T)^{n_k}(x_1,x_2) \to (0,0).
\end{equation}
Now, let $(y_1,y_2)\in D_2\oplus D_2$ then $y_1, y_2\in D_2$. Also, by hypothesis, there exist two sequences say $\set{x_k}_{k\in\N}$ and $\set{y_k}_{k\in\N}$ in $\m$ such that 
 $$x_k\to 0 \mbox{ and } T^{n_k}x_k \to y_1,$$
and
$$y_k\to 0 \mbox{ and } T^{n_k}y_k \to y_2.$$
Therefore,
\begin{equation}\label{cc23}
(x_k,y_k)\to (0,0) \mbox{ and } (T\oplus T)^{n_k}(x_k,y_k)\to (y_1,y_2).
\end{equation}
Finally, since  $T^{n_k}\m \subseteq \m$, then
 \begin{equation}\label{cc24}
(T\oplus T)^{n_k}(\m\oplus\m) \subseteq \m\oplus\m.
\end{equation}
It follows by \Cref{cc22}, \Cref{cc23} and \Cref{cc24}  that $T\oplus T$ satisfies $\m\oplus\m$-hypercyclic criterion, and thus hypercyclic. \\
\endproof

\begin{cor}\label{201}
If $T$ satisfies subspace-hypercyclic criterion, then $T\oplus T$ is subspace-hypercyclic.
\end{cor}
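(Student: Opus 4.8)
The plan is to read this off immediately from Proposition~\ref{cc37}. Let $\m$ be the subspace for which $T$ satisfies the $\m$-hypercyclic criterion. Since $\m$ is closed and nontrivial in $\x$, the subspace $\m\oplus\m$ is closed and nontrivial in $\x\oplus\x$, so it is a legitimate candidate subspace. By Proposition~\ref{cc37}, $T\oplus T$ satisfies the $(\m\oplus\m)$-hypercyclic criterion, and hence — exactly as recorded at the end of the proof of that proposition — $T\oplus T$ is $(\m\oplus\m)$-hypercyclic. In particular $T\oplus T$ is subspace-hypercyclic, which is the assertion.

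For completeness I would include a one-line reminder of why the subspace-hypercyclic criterion (even with respect to a subsequence $\set{n_k}_{k\in\N}$) forces subspace-hypercyclicity, carried out now inside the complete metric space $\m\oplus\m$. Fixing a countable base $\set{W_j}_{j\in\N}$ of relatively open sets of $\m\oplus\m$, one considers
\[
\bigcap_{j\in\N}\ \bigcup_{k\in\N}\ \set{v\in\m\oplus\m : (T\oplus T)^{n_k}v\in W_j},
\]
a set whose elements are precisely the vectors of $\m\oplus\m$ whose orbit along $\set{n_k}$ is dense in $\m\oplus\m$. Because $(T\oplus T)^{n_k}(\m\oplus\m)\subseteq\m\oplus\m$ each set in the union is relatively open, and the two limiting conditions of the criterion — one pushing a dense set of vectors to $0$ along $\set{n_k}$, the other pulling a dense set of targets back through near-zero vectors along $\set{n_k}$ — give, by the usual approximation, that each of these unions is dense in $\m\oplus\m$. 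Baire's theorem then yields a residual, hence nonempty, set of $(\m\oplus\m)$-hypercyclic vectors for $T\oplus T$.

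I do not anticipate any genuine obstacle here: the substantive work is Proposition~\ref{cc37}, and the implication ``satisfies the subspace-hypercyclic criterion $\Rightarrow$ subspace-hypercyclic'' is the one already invoked (without further comment) at the close of that proof, and is available in the literature \cite{madore2011subspace, le2011subspace}. If one wishes to keep the corollary a true one-liner, it suffices to cite that implication and apply it to $T\oplus T$ on the subspace $\m\oplus\m$ furnished by Proposition~\ref{cc37}.
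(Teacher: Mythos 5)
Your proposal is correct and follows exactly the route the paper intends: the corollary is an immediate consequence of Proposition~\ref{cc37} together with the standard fact that satisfying the subspace-hypercyclic criterion implies subspace-hypercyclicity, which is precisely what the closing line of the proof of Proposition~\ref{cc37} already records. The additional Baire-category sketch is a harmless (and accurate) elaboration of that cited implication.
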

For hypercyclicity, B\'{e}s and Peris \cite{bes1999hereditarily} proved that the converse of the \Cref{201} above is also true . The next theorem shows that the converse of \Cref{201} is true under certain conditions, which gives a partial positive answer to \Cref{4}. First we need the following lemma.

\begin{lem}\label{1.3}
Let $T, S\in\bx$ satisfy the equation $TS=ST$ and $\m$ be a subspace of $\x$. If $x\in HC(T,\m)$, then $Sx\in HC(T,S\m)$.
\end{lem}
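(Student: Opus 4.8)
The plan is to exploit that $S$ \emph{intertwines} $T$ with itself. Since $TS=ST$, an immediate induction gives $T^nS=ST^n$ for every $n\ge 0$, and hence
\[
Orb(T,Sx)=\set{T^nSx:n\ge 0}=\set{ST^nx:n\ge 0}=S\big(Orb(T,x)\big).
\]
So the orbit of $Sx$ under $T$ is exactly the image under the bounded operator $S$ of the orbit of $x$ under $T$. It then only remains to observe that a bounded operator carries a set dense in $\m$ to a set dense in its image $S\m$, and to feed the hypothesis (density of the orbit of $x$ in $\m$) into this observation.

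Concretely I would argue in three short steps. First, record the commutation identity $T^nS=ST^n$ and the resulting equality $Orb(T,Sx)=S(Orb(T,x))$. Second, prove the elementary density-transfer fact: if $D\subseteq\m$ is dense in $\m$ and $S\in\bx$, then $S(D)$ is dense in $S\m$; indeed, given $y\in S\m$ write $y=Sm$ with $m\in\m$, choose $d_k\in D$ with $d_k\to m$, and use continuity of $S$ to obtain $Sd_k\to Sm=y$, noting $Sd_k\in S(D)$. Third, combine them: since $Orb(T,x)$ is dense in $\m$, its image $S(Orb(T,x))=Orb(T,Sx)$ is dense in $S\m$, which says precisely that $Sx\in HC(T,S\m)$.

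I do not expect a genuine obstacle here: the lemma is really just the remark that subspace-hypercyclicity is transported by a commuting bounded operator, and the whole proof amounts to the two displayed lines plus the one-line density argument. The only points needing a word of care are bookkeeping. First, $S\m$ need not be a closed subspace, so $HC(T,S\m)$ must be read with $S\m$ a possibly non-closed subspace (or, if one insists on closed subspaces throughout, the honest statement is that $Orb(T,Sx)$ is dense in $\overline{S\m}$). Second, if the working definition of ``orbit dense in $\m$'' is phrased as $\overline{Orb(T,x)\cap\m}=\m$, then one should first use linearity of $S$ to get $S\big(Orb(T,x)\cap\m\big)\subseteq Orb(T,Sx)\cap S\m$, and apply the density-transfer step to $D=Orb(T,x)\cap\m$; the argument is otherwise unchanged.
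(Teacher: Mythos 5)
Your proposal is correct and follows essentially the same route as the paper: the commutation identity gives $Orb(T,Sx)=S(Orb(T,x))$, and continuity of $S$ transfers density in $\m$ to density in $S\m$, with the same bookkeeping step $S\bigl(Orb(T,x)\cap\m\bigr)\subseteq Orb(T,Sx)\cap S\m$ that the paper uses in its chain of inclusions. Your side remark that $S\m$ need not be closed (so one should really conclude density in $\overline{S\m}$, or read $S\m$ as a possibly non-closed subspace) is a fair point that the paper itself glosses over.
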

\proof
Let $x\in HC(T,\m)$, then $Orb(T,x)\cap \m $ is dense in $\m$. Thus
\begin{eqnarray*}
\overline{Orb(T,Sx)\cap S\m}&=& \overline{\set{ T^nSx:n\geq 0}\cap S\m}\\
                       &=&\overline{\set{ ST^nx:n\geq 0}\cap S\m}\\
											 &=&\overline{S \set{T^nx:n\geq 0}\cap S\m }\\
											&\supseteq&  \overline{S \left(\set{ T^nx:n\geq 0}\cap \m\right)}\\
											&\supseteq& S\left( \overline{\set{ T^nx:n\geq 0}\cap \m}\right)\\
											&=& S\m.
\end{eqnarray*}
It follows that $Orb(T,Sx)\cap S\m$ is dense in ${SM}$. Thus $T$ is $S\m$-hypercyclic operator with subspace-hypercyclic vector $Sx$. \\
\endproof


\begin{thm}\label{202}
Let $T \oplus T$ be $(\m \oplus \m)$-hypercyclic. If there exists a subspace-hypercyclic vector $(x,y)$ such that $x$ has the following properties
\begin{enumerate}[(i)]
\item \label{cc25} $x\in \bigcap_{n\in \N}T^{n}(\m)$,
\item \label{cc26} $T^j\m \subseteq \m$ whenever $T^jx \in Orb(T,x)\cap \m$.
\end{enumerate}
Then $T$ satisfies $\m$-hypercyclic criterion. 
\end{thm}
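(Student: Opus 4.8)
The plan is to verify the $\m$-hypercyclic criterion for $T$ directly: I must exhibit dense subsets $D_1,D_2$ of $\m$ and an increasing sequence $\{n_k\}\subseteq\N$ with $T^{n_k}\m\subseteq\m$ for all $k$, such that $T^{n_k}u\to 0$ for every $u\in D_1$ and, for every $v\in D_2$, there is a sequence $\{w_k\}\subseteq\m$ with $w_k\to 0$ and $T^{n_k}w_k\to v$. By \Cref{cc34} applied with $\m_1=\m_2=\m$, the vector $x$ is itself $\m$-hypercyclic for $T$, so $Orb(T,x)\cap\m$ is dense in $\m$; I would take $D_1=D_2=Orb(T,x)\cap\m$. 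Put $J=\{j\ge 0: T^{j}x\in\m\}$; hypothesis (ii) then gives $T^{j}\m\subseteq\m$ for every $j\in J$, and $J$ is infinite because $\m$ is infinite dimensional.

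Next I would pin down the sequence and dispose of the ``forward'' half. Since $(x,y)\in HC(T\oplus T,\m\oplus\m)$, the orbit of $(x,y)$ meets $\m\oplus\m$ densely, so there is an increasing $\{n_k\}$ with $(T^{n_k}x,T^{n_k}y)\in\m\oplus\m$ and $(T^{n_k}x,T^{n_k}y)\to(0,0)$; in particular $T^{n_k}x\in\m$, hence $n_k\in J$ and $T^{n_k}\m\subseteq\m$. For $u=T^{l}x\in D_1$ with $l\in J$, continuity of $T^{l}$ gives $T^{n_k}u=T^{l}(T^{n_k}x)\to 0$, which settles the forward condition. For the ``backward'' half, fix $v=T^{l}x\in D_2$ with $l\in J$. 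Hypothesis (i) supplies, for each $k$, a vector $u_k\in\m$ with $T^{n_k}u_k=x$; since $l\in J$ we have $T^{l}\m\subseteq\m$, so $w_k:=T^{l}u_k$ lies in $\m$ and $T^{n_k}w_k=T^{l}(T^{n_k}u_k)=T^{l}x=v$ exactly. The only thing left to secure is $w_k\to 0$, equivalently that the preimages $u_k$ can be taken small.

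This last point is the crux, since hypothesis (i) only asserts the preimages exist. The remedy is not to read off $\{n_k\}$ from the single requirement $T^{n_k}x\to 0$, but to interlace the two demands in the spirit of B\'es--Peris: enumerating $D_1=D_2=\{T^{j_1}x,T^{j_2}x,\dots\}$, one chooses $n_k>n_{k-1}$ in $J$ (so $T^{n_k}\m\subseteq\m$ automatically) so large that \emph{simultaneously} $\|T^{n_k+j_i}x\|<1/k$ for all $i\le k$ --- possible because $0$ lies in the closure of $Orb(T,x)\cap\m$ and each $T^{j_i}$ is continuous --- and, using hypothesis (i) together with the full $(\m\oplus\m)$-hypercyclicity of $(x,y)$, that $x$ admits a preimage $u_k\in\m$ under $T^{n_k}$ with $\|T^{j_i}u_k\|<1/k$ for all $i\le k$. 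Setting $w_k=T^{j_i}u_k$ for $k\ge i$ and passing to the limit then yields all clauses of the criterion.

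The genuine obstacle is exactly this simultaneous choice: producing, along one and the same sequence of admissible times, both the decay of the prescribed finitely many orbit points and small preimages of $x$ inside $\m$. This is where the hypotheses earn their keep: (ii) furnishes the abundant set $J$ of times with $T^{n}\m\subseteq\m$ and keeps every iterate in play inside $\m$; (i) guarantees preimages of $x$ at every level; and the full strength of $(\m\oplus\m)$-hypercyclicity of $(x,y)$---rather than mere $\m$-hypercyclicity of $x$---is what reconciles the forward-to-$0$ and backward-from-$0$ behaviour along a single sequence. Once this sequence is in hand, the verification is routine and $T$ satisfies the $\m$-hypercyclic criterion.
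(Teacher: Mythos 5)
Your skeleton matches the paper's: the dense sets $D_1=D_2=Orb(T,x)\cap\m$, the forward condition via continuity of $T^{l}$ applied to $T^{n_k}x\to 0$, and the recognition that everything hinges on producing, along one and the same sequence of admissible times $n_k$, small vectors $u_k\in\m$ whose images $T^{n_k}u_k$ approach $x$. But that last step --- which you yourself flag as ``the crux'' --- is exactly where your proposal stops being a proof. You assert that ``hypothesis (i) together with the full $(\m\oplus\m)$-hypercyclicity of $(x,y)$'' yields a preimage $u_k\in\m$ of $x$ under $T^{n_k}$ with $\norm{T^{j_i}u_k}<1/k$, but you never exhibit the mechanism, and that mechanism is the entire content of the theorem. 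Worse, the exact-preimage formulation you set up cannot be pushed through: hypothesis (i) only guarantees that $x$ has \emph{some} preimage in $\m$ under each $T^{n}$, with no control whatsoever on its norm, and nothing in the hypercyclicity of $(x,y)$ lets you shrink a genuine preimage afterwards.

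The paper's argument replaces exact preimages by approximate ones drawn from the orbit of the \emph{second} coordinate, and this is where \Cref{1.3} and hypothesis (i) actually enter. Since $y$ is $\m$-hypercyclic (by \Cref{cc34}), one can pick $u_k=T^{r_k}y\in\m$ with $\norm{u_k}<1/k$. Because $I\oplus T^{r_k}$ commutes with $T\oplus T$, \Cref{1.3} shows that $(x,u_k)$ is hypercyclic for $T\oplus T$ relative to the subspace $\m\oplus T^{r_k}\m$; hypothesis (i) guarantees $x\in T^{r_k}\m$, i.e.\ the target $(0,x)$ lies in that subspace, so some iterate $(T^{n_k}x,T^{n_k}u_k)$ lands within $1/k$ of $(0,x)$. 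Thus $u_k$ is a small approximate preimage of $x$ under $T^{n_k}$, $T^{n_k}x\to 0$, condition (ii) gives $T^{n_k}\m\subseteq\m$, and the maps $S_{n_k}(T^{i}x)=T^{i}u_k$ verify the criterion. So the role of (i) is not to manufacture preimages of $x$ at all, but to place $(0,x)$ inside the shifted subspace $\m\oplus T^{r}\m$ in which the orbit of $(x,T^{r}y)$ is dense. You should rework your ``crux'' paragraph along these lines; as written it names the difficulty without resolving it.
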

\proof
Let $(x, y)\in HC(T \oplus T, \m \oplus \m)$ such that $x$ satisfies the stated conditions. Since $x\in HC(T, \m)$ (by \Cref{cc34}), we  show that the $\m$-hypercyclicity criterion is satisfied by the dense sets $D=D_1=D_2=Orb(T,x)\cap \m$ . Since the operator $I \oplus T^n$ commute with $T \oplus T$, then by \Cref{1.3}, we have 
\begin{equation}\label{88} 
(x, T^ny)\in HC(T \oplus T, \m \oplus T^n\m)
\end{equation}
for all $n\in \N$.
Since $y\in HC(T,\m)$ (by \Cref{cc34}), then $Orb(T,y)\cap \m$ intersects every open set in $\m$. Let $U$ be an open neighborhood of $0$ in $\m$, then there exist $u_0\in U \subseteq \m, r\in \no$ such that $u_0=T^{r}y$. It follows by \Cref{88} that 
$$(x, u_0)\in HC(T \oplus T, \m \oplus T^{r}\m).$$
By condition \textit{\ref{cc25}}, we have $(0, x) \in \m \oplus T^{r}\m$, then there exists $n_0 \in \N$ such that 
$$ (T^{n_0}x, T^{n_0}u_0 )\in Orb \left(T \oplus T, (x, u_0)\right)\cap \left(\m \oplus T^{r}\m \right)$$
 is arbitrarily close to $(0, x)$. Since $T^{n_0}x\in \m$, then $T^{n_0}x\in D$, and so $T^{n_0}\m \subseteq \m$ by condition \textit{\ref{cc26}}.
By continuing the same process, we get a sequence $u_k \to 0$ in $\m$ and $n_k\in \N$ such that  $T^{n_k}x \to 0$,  $T^{n_k}u_k \to x$ and $T^{n_k}\m \subseteq \m$.\\
Let $C=\set{i\in\N_0:T^ix\in D}$, and let us define maps $S_{n_k}:D \to \m$ by $S_{n_k}(T^ix)=T^iu_k; i\in C$ (by condition \textit{\ref{cc26}}, it is clear that $S_{n_k}(T^ix) \in \m$ for all $k\in \no, i\in C$). Now, for all $T^ix\in D$, we have
\begin{eqnarray*}
&&T^{n_k}(T^ix)=T^i(T^{n_k}x) \to 0,\\
&&S_{n_k}(T^ix)=T^iu_k \to 0 \mbox{ since } u_k\to 0,\\
&&T^{n_k}S_{n_k}(T^ix)=T^iT^{n_k}u_k \to T^ix.
\end{eqnarray*}
for all $i\in C$. It follows that $T$ satisfies $\m$-hypercyclic criterion. \\
\endproof
It follows by \Cref{202} above that hypercyclicity and subspace-hypercyclicity do not share the same properties. Therefore, one may wonder whether there are some more similarities and differences between hypercyclicity and subspace-hypercyclicity.

\end{document}